\documentclass{iopjournal}
\usepackage{amsmath,amssymb,amsthm,bm}
\numberwithin{equation}{section}

\newtheorem{theorem}{Theorem}[section]
\newtheorem{lemma}[theorem]{Lemma}

\newtheorem{remark}[theorem]{Remark}

\newcommand{\D}{\mathbb D}

\usepackage{graphicx}
\usepackage{subcaption}

\DeclareMathOperator*{\argmax}{arg\,max}

\begin{document}


\title{A Moment--Hankel Rank Method for Identifying the Number of Point Sources in the Heat Equation}

\author{Zhiliang Deng$^1$\orcid{0000-0002-3319-3385}, Xiaomei Yang$^{2, *}$ and Ailin Qian$^{3}$}

\affil{$^1$School of Mathematical Science, University of Electronic Science and Technology of China, Chengdu, China}

\affil{$^2$School of Mathematics, Southwest Jiaotong University, Chengdu, China}

\affil{$^3$Department of Mathematics and Statistics, Hubei  University of Science and Technology, Xianning, Hubei}

\email{yangxiaomaht@swjtu.edu.cn}

\keywords{inverse heat source problem, point source, source counting, boundary flux data,
Fourier moments, Hankel matrix, numerical rank, singular-value stability}

\begin{abstract}
We develop a low-frequency moment--Hankel rank method for identifying the
number of time-independent point sources in the heat equation from boundary
flux data. The method is formulated in the unit disk, where the
Laplace-transformed and normalized boundary flux admits an explicit Fourier
moment representation. By taking the low-frequency limit, we obtain a finite exponential-sum moment
sequence in which the nodes encode the source locations, the weights encode the
source strengths, and the number of terms equals the number of point sources.
 The associated Hankel matrix admits a Vandermonde
factorization, and its rank is exactly the source number under the natural
assumptions that the source locations are distinct and the source strengths
are nonzero.
We also analyze the effect of discrete and noisy boundary data. A uniform
moment perturbation bound is propagated to the empirical Hankel matrix, and
Weyl's singular-value perturbation inequality yields a sufficient condition
for stable numerical rank recovery in terms of the smallest nonzero singular
value of the ideal Hankel matrix. Numerical experiments confirm the exact rank
pattern in the noiseless case, validate the stability threshold under moment
noise, and illustrate the loss of resolution for close or weak sources. After
the source number is identified, the same moment sequence can be used for
location and strength recovery through an annihilating-polynomial and
Vandermonde reconstruction procedure.
\end{abstract}

\section{Introduction}
\label{sec:introduction}

%

In this paper, we consider a point source recovery problem governed by the heat equation in the unit disk \(\mathbb{D}\subset\mathbb R^2\)
\begin{align}\label{heat_equation}
\left\{
\begin{aligned}
&\partial_tu(x, t)-\Delta u(x, t)=\sum_{j=1}^{N}q_j\delta(x-p_j), && (x, t)\in \mathbb D\times(0, \infty),\\
&u(x, t)=0, && (x, t)\in \partial\mathbb D\times(0, \infty),\\
&u(x, 0)=0, && x\in\mathbb D.
\end{aligned}
\right.
\end{align}
where the locations \(p_j\in\mathbb D\), the strengths \(q_j\neq0\),  and the number \(N\) of sources are unknown and need to be determined from the boundary heat fluxes
\begin{align}
f(\theta,t)
=
\frac{\partial u}{\partial\nu}(e^{i\theta},t),
\quad 0\leq\theta<2\pi.
\end{align}

Inverse source problems for parabolic equations have received considerable
attention over the past decades. Classical results on uniqueness, stability,
and reconstruction have been established for a range of settings, including
different observation types, source regularities, and a priori information
\cite{Bushuyev1995,Cannon1998,Choulli1994,Isakov1990,Isakov1991,Kian2019,
Reeve1994,Rundell1980,Solovev1989}. These developments have been largely
driven by applications such as medical imaging, environmental monitoring,
groundwater contamination identification, and structural health monitoring
\cite{Badia2005,Baillet2001,Baratchart2005,Gallet2022,He2018,Kovalets2011,
Moghaddam2021,Shlomi2007}. Nevertheless, the recovery of point sources
represented by Dirac measures remains a distinct inverse problem, because the
unknown source configuration contains not only locations and strengths, but
also the number of active sources.

For heat equations with point sources, several recent works are directly
related to the present study. Ling et al. \cite{Ling2006} showed, under the
assumption that the source function is a sum of known spatial functions, that
one measurement point can identify the number of sources and three measurement
points can determine all source locations in two-dimensional domains. Gong et
al. \cite{Gong2026} proved uniqueness for recovering one point source with a
piecewise constant-in-time amplitude from boundary flux data, using
eigenfunction expansions, kernel estimates, and complex-analytic arguments. Gu
et al. \cite{Gu2025} established a uniqueness theorem for a Dirac point source
from sparse boundary measurements and proposed a least-squares reconstruction
method solved by gradient descent. Deng et al. \cite{Deng2026} developed a
Bayesian thinning algorithm to infer the number, locations, and intensities of
heat point sources when the cardinality is unknown. In
\cite{Deng2026_Hankel}, Deng et al. introduced a Hankel determinant
characteristic for determining the number of point sources and used the
argument principle to compute the associated determinant zero count.

Algebraic Hankel-type methods have also appeared in related inverse source
problems for elliptic equations. Abdelaziz et al. \cite{Abdelaziz2015} studied inverse source problems for two-dimensional
elliptic equations, including the Helmholtz equation, from Cauchy data. They
considered pointwise sources and sources supported in small subdomains,
derived reciprocity-gap type algebraic identities, and constructed
Hankel-type matrices to recover source information through rank identification
and a companion-matrix procedure. Point-source recovery problems for other
types of equations have also been studied; see, for example,
\cite{Baratchart2005,Komornik2002,Mamonov2013,Ohe2011,Ren2019,Vessella1992}.
These works show that point-source identification often admits finite-dimensional
algebraic structures, but the precise structure depends strongly on the
governing equation and on the type of boundary data.

The present paper follows a different low-frequency moment route for
stationary point sources in the heat equation. After Laplace transformation and
normalization, we take the low-frequency limit of the boundary heat flux and
compute its positive Fourier moments. These moments form a finite
exponential-sum sequence: the nodes encode the source locations, the weights
encode the source strengths, and the number of exponential terms is the number
of point sources. The associated Hankel matrix admits a Vandermonde
factorization, and its rank equals the source number when the source locations
are distinct and the strengths are nonzero.

This rank-based mechanism is different from the determinant zero-counting
approach in \cite{Deng2026_Hankel}. There the source number is encoded in the
zero order of an analytic Hankel determinant characteristic
\(\Delta_m(s)\), and the computation is implemented by an argument-principle
contour integral in the complex Laplace-frequency plane. Here we use only the
low-frequency moment sequence and obtain a static Hankel matrix. Therefore the
source number is identified from the rank, or numerical rank, of this matrix.
For discrete and noisy boundary data, we further quantify the perturbation of
the singular values by Weyl's inequality and derive a sufficient condition for
stable numerical source counting. Once the source number is determined, the
same moment sequence can be used to recover source locations and strengths
through an annihilating-polynomial and Vandermonde reconstruction procedure.

The rest of the paper is organized as follows. The remainder of the
Introduction formulates the inverse heat point-source problem and introduces
the normalized boundary flux. Section~\ref{sec:hankel-rank} derives the
low-frequency moment representation and proves the moment--Hankel rank
characterization of the source number. Section~\ref{sec:discrete-noisy-rank}
discusses discrete boundary measurements, noisy moments, and numerical rank
selection. Section~\ref{sec:numerical} presents numerical experiments, and
Section~\ref{sec:conclusion} concludes the paper.

\section{Moment--Hankel Rank Principle}
\label{sec:hankel-rank}

This section derives the moment--Hankel rank principle for source counting.
Starting from the Laplace-transformed boundary flux, we first remove the known
factor caused by the time-independent source strength and introduce the
normalized boundary flux. We then take its low-frequency limit and show that
the resulting positive Fourier moments form a finite exponential sum. This
finite-dimensional structure yields a Hankel matrix with a Vandermonde
factorization. As a consequence, the rank of the Hankel matrix equals the
number of point sources under the natural assumptions that the source locations
are distinct and the source strengths are nonzero.


The Laplace transform (with respect to $t$) of system \eqref{heat_equation}, using the initial condition $u(x, 0)=0$,  yields the following boundary value problem for the modified Helmholtz equation:
\begin{align}
\left\{
\begin{aligned}
&s\widehat u(x, s)-\Delta\widehat u(x, s)=\frac1s\sum_{j=1}^{N}q_j\delta(x-p_j),
\\
& \widehat u|_{\partial\mathbb D}=0.
\end{aligned}
\right.
\end{align}
Let $G_s(x, y)$ denote the Green's function for the modified Helmholtz operator $s-\Delta$ on $\D$ with homogeneous Dirichlet boundary conditions. Then the solution $\widehat u$ can be expressed as
\begin{align}
\widehat u(x, s)=\frac1s \sum_{j=1}^{N}q_j  G_s(e^{i\theta}, p_j).
\end{align}
Therefore   the Laplace transform of the boundary flux  takes the form
\begin{align}
\widehat f(\theta,s)=\frac1s
\sum_{j=1}^{N}q_j \frac{\partial G_s}{\partial\nu_z}(e^{i\theta}, p_j).
\end{align}
Since the factor \(1/s\) is known, we use the normalized boundary flux
\begin{align}
\mathcal F(\theta,s):=s\widehat f(\theta,s).
\end{align}
The proposed source-counting method is based on the low-frequency limit of
\(\mathcal F(\theta,s)\).

Let
\begin{align}
g(\theta):=\lim_{s\to0}\mathcal F(\theta,s).
\end{align}
Since \(G_s\to G_0\) as \(s\to0\), where \(G_0\) is the Dirichlet Green function
for \(-\Delta\) in the unit disk, we have
\begin{align}\label{sec2:eqg}
g(\theta)=\sum_{j=1}^{N}q_j \frac{\partial G_0}{\partial\nu_z}(e^{i\theta}, p_j).
\end{align}
Writing $p_j=\rho_j e^{i\phi_j}$, $0\leq\rho_j<1$,
the boundary normal derivative of \(G_0\) admits the Fourier expansion
\begin{align}\label{sec2:eqgr}
\frac{\partial G_0}{\partial\nu_z}(e^{i\theta},p_j)=-\frac1{2\pi}\sum_{n=-\infty}^{\infty}\rho_j^{|n|}e^{in(\theta-\phi_j)}.
\end{align}
Define the positive Fourier moments
\begin{align}
\mathcal M_n=
\frac1{2\pi}
\int_0^{2\pi}
g(\theta)e^{-in\theta}\,d\theta,
\qquad n=0, 1, \ldots.
\end{align}
According to \eqref{sec2:eqg} and \eqref{sec2:eqgr}, we have the representation
\begin{align}
\mathcal M_n
=-\frac1{2\pi}\sum_{j=1}^{N}q_j\left(\rho_j e^{-i\phi_j}\right)^n.
\end{align}
With
\[
\lambda_j:=\rho_j e^{-i\phi_j},
\qquad
w_j:=-\frac{q_j}{2\pi},
\]
the moments take the finite exponential-sum form
\begin{align}\label{sec:eqf}
\mathcal M_n=\sum_{j=1}^{N}w_j\lambda_j^n,
\qquad n=0, 1, \ldots.
\end{align}
Thus the source configuration is encoded in the moment sequence
\(\{\mathcal M_n\}_{n\ge0}\): the nodes \(\lambda_j\), the weights \(w_j\), and
the number of exponential terms correspond respectively to the source
locations, strengths, and source number.

To extract the number of exponential terms from the moment sequence, we define,
for \(m\ge1\), the Hankel matrix
\begin{align}
H_m=\left(\mathcal M_{r+c}\right)_{r,c=0}^{m-1}.
\end{align}
Using the finite exponential-sum representation \eqref{sec:eqf}, we obtain the factorization
\begin{align}
H_m=V_m(\lambda)\operatorname{diag}(w_1,\ldots,w_N)V_m(\lambda)^{\top},
\end{align}
where
\begin{align}
V_m(\lambda)=\left(\lambda_j^r\right)_{r=0,\ldots, m-1;\ j=1,\ldots,N}.
\end{align}

\begin{theorem}
\label{thm:hankel-rank}
Assume that the source locations are distinct and that the source strengths
are nonzero. Equivalently,
\[
\lambda_i\neq\lambda_j\quad (i\neq j),
\qquad
w_j\neq0.
\]
Then, for every \(m\ge N\),
\[
\operatorname{rank}H_m=N.
\]
Consequently, if \(m\) is chosen no smaller than the true source number, then
the number of point sources is determined by the rank of the Hankel matrix.
\end{theorem}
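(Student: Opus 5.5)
The proof will work directly from the Vandermonde factorization $H_m=V_m(\lambda)\,W\,V_m(\lambda)^{\top}$ with $W=\operatorname{diag}(w_1,\ldots,w_N)$. First I check this factorization entrywise. The $(r,c)$ entry of the right-hand side is $\sum_{j}\lambda_j^r w_j\lambda_j^c=\sum_j w_j\lambda_j^{r+c}=\mathcal M_{r+c}$, by the exponential-sum representation \eqref{sec:eqf}. The upper bound then follows from the dimensions alone: $H_m$ is a product through an $N\times N$ middle factor, so $\operatorname{rank}H_m\le N$ for every $m$.

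For the lower bound when $m\ge N$, I show that $V_m(\lambda)\in\mathbb C^{m\times N}$ has full column rank $N$. Its top $N\times N$ block, consisting of rows $r=0,\ldots,N-1$, is the square Vandermonde matrix with determinant $\prod_{1\le i<j\le N}(\lambda_j-\lambda_i)$. This determinant is nonzero because the nodes are distinct. The distinctness of the $\lambda_j$ is equivalent to distinctness of the $p_j$, since $\lambda_j=\overline{p_j}$, and complex conjugation is injective. The point $p_j=0$ causes no difficulty: it gives $\lambda_j=0$, and the convention $0^0=1$ is consistent with $\mathcal M_0=\sum_j w_j$. Hence $V_m(\lambda)$ has rank $N$. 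The middle factor $W$ is invertible because every $w_j=-q_j/(2\pi)\neq0$.

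The key step is to combine these facts. Set $A=V_m(\lambda)W$, which is $m\times N$ of rank $N$, and $B=V_m(\lambda)^{\top}$, which is $N\times m$ of rank $N$. A matrix with full column rank has a left inverse, and one with full row rank has a right inverse. So there exist $L\in\mathbb C^{N\times m}$ and $R\in\mathbb C^{m\times N}$ with $LA=I_N$ and $BR=I_N$. Then $L H_m R=I_N$, which gives $N\le\operatorname{rank}H_m$. An equivalent argument works through kernels and images: $B$ maps onto $\mathbb C^N$ and $A$ is injective, so the image of $AB$ is $N$-dimensional. Note that the factorization uses the transpose, not the conjugate transpose, so no positivity argument is available; this is why I go through left and right inverses. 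Sylvester's rank inequality would serve equally well.

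The main point needing care is this last step. The rank of a product is only bounded above by the ranks of its factors, and equality requires the full column and row rank just established. Everything else is routine. The "consequently" clause follows immediately: for any $m\ge N$, $\operatorname{rank}H_m$ equals the true $N$. One can add that for $m<N$ the rank is $\min(m,N)=m$, since the $m\times m$ leading Vandermonde block is then square with full rank, so $H_m$ is nonsingular. Thus increasing $m$ until the rank stabilizes identifies $N$.
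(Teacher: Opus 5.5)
Your proof of the stated theorem is correct and follows the paper's argument. Like the paper, you use the full column rank of $V_m(\lambda)$ for $m\ge N$ (from the nonvanishing leading Vandermonde minor), the invertibility of $\operatorname{diag}(w_1,\ldots,w_N)$, and the factorization of $H_m$. Your left/right-inverse step spells out the rank-of-a-product fact that the paper leaves implicit.

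You should, however, delete the closing aside about $m<N$, because it is false. When $m<N$, invertibility of the leading $m\times m$ block of $V_m(\lambda)$ gives only full \emph{row} rank. Then $H_m=(V_m(\lambda)W)\,V_m(\lambda)^{\top}$ is a full-row-rank factor times a full-column-rank factor, the wrong way round for your left/right-inverse argument. Such a product is singular whenever the range of $V_m(\lambda)^{\top}$ meets $\ker(V_m(\lambda)W)$.

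Two counterexamples show this. First, $N=2$ with $q_1=-q_2$ gives $H_1=(\mathcal M_0)=0$. More seriously, take four equal sources at the vertices of a square centred at the origin: $\lambda_k=\rho e^{ik\pi/2}$ for $k=0,\dots,3$, with $0<\rho<1$ and $w_k=w\neq0$. Then $\mathcal M_n=4w\rho^n$ if $4\mid n$ and $\mathcal M_n=0$ otherwise, so
$\operatorname{rank}H_1=\operatorname{rank}H_2=1$, $\operatorname{rank}H_3=2$ and $\operatorname{rank}H_4=4$. The rank plateaus below $N$, so the rule ``increase $m$ until the rank stabilizes'' would return $1$ instead of $4$.

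The pattern $\operatorname{rank}H_m=\min\{m,N\}$ reported in the paper's experiments is only generic. This is why the theorem, and any practical use of it, needs an a priori bound $m\ge N$.
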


\begin{proof}
Since the nodes \(\lambda_1,\ldots,\lambda_N\) are pairwise distinct,
\(V_m(\lambda)\) has full column rank for \(m\ge N\). Since all weights are
nonzero, \(\operatorname{diag}(w_1,\ldots,w_N)\) is nonsingular. Therefore the
factorization of \(H_m\) gives \(\operatorname{rank}H_m=N\).
\end{proof}
\begin{remark}
Theorem~\ref{thm:hankel-rank} is the algebraic basis of the proposed
source-counting method. It shows that the low-frequency boundary moments
convert the inverse point-source counting problem into a finite-dimensional
rank-identification problem. In contrast to complex-frequency determinant
methods, no contour integral or zero-order computation is needed at this stage:
the number of sources is read directly from the rank of a static Hankel matrix.

In exact data, this gives \(N=\operatorname{rank}H_m\) for any \(m\geq N\).
For discrete and noisy data, the exact rank is replaced by a numerical rank
computed from the singular values of the empirical Hankel matrix. The method is
therefore stable when the \(N\)-th singular value is well separated from the
noise floor, which in turn requires sufficiently separated source nodes and
non-negligible source strengths.
\end{remark}


\section{Discrete Data and Numerical Rank Selection}
\label{sec:discrete-noisy-rank}

This section turns the exact rank characterization into a practical
source-counting procedure from discrete and noisy boundary data. In
computations, the low-frequency boundary data \(g(\theta)\) are available only
at finitely many boundary points and are usually contaminated by measurement
noise. We first approximate the Fourier moments by a discrete quadrature and
form an empirical Hankel matrix. We then estimate how boundary sampling and
measurement noise perturb the moments. Finally, using Weyl's inequality for
singular values, we derive a sufficient condition under which the numerical
rank of the perturbed Hankel matrix still gives the true source number.

\subsection{Discrete and noisy moments}

Suppose that the boundary is sampled at uniformly distributed points
\[
\theta_\ell=\frac{2\pi(\ell-1)}{L},
\qquad
\ell=1,\ldots,L.
\]
The positive Fourier moments are approximated by the trapezoidal rule
\begin{align}
\label{eq:discrete-moment}
\mathcal M_n^{(L)}=\frac1L\sum_{\ell=1}^{L}g(\theta_\ell)e^{-in\theta_\ell},
\qquad
n=0,1,\ldots,2m-2.
\end{align}
The corresponding discrete Hankel matrix is
\begin{align}
\label{eq:discrete-hankel-rank}
H_m^{(L)}=\left(\mathcal M_{r+c}^{(L)}\right)_{r, c=0}^{m-1}.
\end{align}

In the presence of noisy boundary data, let
\begin{align}
\label{eq:noisy-boundary-lowfreq}
g_\ell^\delta=g(\theta_\ell)+\eta_\ell,
\qquad
\ell=1,\ldots,L,
\end{align}
where \(\eta_\ell\) denotes the measurement noise at the boundary point
\(\theta_\ell\). The noisy discrete moments are
\begin{align}
\label{eq:noisy-discrete-moment-rank}
\widetilde{\mathcal M}_n
=
\frac1L
\sum_{\ell=1}^{L}
g_\ell^\delta e^{-in\theta_\ell},
\qquad
n=0,1,\ldots,2m-2,
\end{align}
and the corresponding empirical Hankel matrix is
\begin{align}
\label{eq:noisy-hankel-rank}
\widetilde H_m
=
\left(\widetilde{\mathcal M}_{r+c}\right)_{r,c=0}^{m-1}.
\end{align}
The exact rank identity in Theorem~\ref{thm:hankel-rank} no longer applies
directly to \(\widetilde H_m\), because noise and quadrature errors lift the
zero singular values of the ideal Hankel matrix. Therefore the exact rank must
be replaced by a numerical rank determined from the singular-value spectrum.

\subsection{A bound for the moment error}
\label{subsec:moment-error}

We first relate the perturbation of the moment sequence to boundary noise and
discrete sampling. Write
\[
\widetilde{\mathcal M}_n
=
\mathcal M_n+e_n,
\qquad
n=0,1,\ldots,2m-2.
\]
We shall use the aggregate moment error
\begin{align}
\label{eq:moment-error-bound}
\max_{0\leq n\leq 2m-2}|e_n|
\leq
\varepsilon_m .
\end{align}

Assume first that the boundary noise satisfies the deterministic bound
\[
|\eta_\ell|\leq\delta_g,
\qquad
\ell=1,\ldots,L.
\]
Then the noise contribution to the moment perturbation is bounded by
\begin{align}
\left|
\frac1L
\sum_{\ell=1}^{L}
\eta_\ell e^{-in\theta_\ell}
\right|
\leq
\delta_g,
\qquad
n=0,1,\ldots,2m-2.
\end{align}

There is also a deterministic quadrature error. Since all sources lie strictly
inside the unit disk, the Fourier coefficients of \(g\) decay geometrically.
Indeed, by  \eqref{sec2:eqg} and \eqref{sec2:eqgr}, we have
\[
g(\theta)
=
-\frac1{2\pi}
\sum_{j=1}^{N}q_j
\sum_{k\in\mathbb Z}
\rho_j^{|k|}e^{ik(\theta-\phi_j)}.
\]
Denoting $c_k:=
-\frac1{2\pi}
\sum_{j=1}^{N}
q_j\rho_j^{|k|}e^{-ik\phi_j}$,
we have
\[
g(\theta)=\sum_{k\in\mathbb Z}c_k e^{ik\theta}.
\]
Let
\[
\rho_*:=\max_{1\leq j\leq N}|p_j|<1,
\qquad
Q:=\frac1{2\pi}\sum_{j=1}^{N}|q_j|.
\]
It follows that
\[
|c_k|
\leq
\frac1{2\pi}
\sum_{j=1}^{N}|q_j|\rho_j^{|k|}
\leq
Q\rho_*^{|k|},
\qquad k\in\mathbb Z.
\]
For \(L>2m-2\), the trapezoidal rule gives the aliasing relation
\[
\mathcal M_n^{(L)}
=
\sum_{j\in\mathbb Z}c_{n+jL},
\qquad
0\leq n\leq2m-2.
\]
Hence we get
\begin{align}
\label{eq:aliasing-error-bound}
\left|
\mathcal M_n^{(L)}-\mathcal M_n
\right|
&\leq
\sum_{j\neq0}|c_{n+jL}|\leq
\frac{Q\bigl(\rho_*^{L-n}+\rho_*^{L+n}\bigr)}
{1-\rho_*^L}                                                   \notag\\
&\leq
\frac{2Q\rho_*^{L-(2m-2)}}{1-\rho_*^L},
\qquad
0\leq n\leq2m-2.
\end{align}
Consequently, the total moment error can be bounded by
\begin{align}
\label{eq:epsilon-m-bound}
\varepsilon_m
\leq
\delta_g
+
\frac{2Q\rho_*^{L-(2m-2)}}{1-\rho_*^L}.
\end{align}
This estimate separates the measurement-noise contribution from the
deterministic boundary-sampling error.

\subsection{Stability of numerical rank selection}
\label{subsec:rank-stability}


We now show how the moment perturbation affects the singular values of the
Hankel matrix. Define
\[
E_m
:=
\widetilde H_m-H_m
=
(e_{r+c})_{r,c=0}^{m-1}.
\]
By \eqref{eq:moment-error-bound}, each entry of \(E_m\) is bounded by
\(\varepsilon_m\). Hence, with \(\|\cdot\|_2\) denoting the spectral norm and
\(\|\cdot\|_F\) denoting the Frobenius norm,  we have
\begin{align}
\label{eq:hankel-perturb-bound}
\|E_m\|_2
\leq
\|E_m\|_F
=
\left(
\sum_{r,c=0}^{m-1}|e_{r+c}|^2
\right)^{1/2}
\leq
m\varepsilon_m .
\end{align}

\begin{lemma}
\label{lem:weyl-hankel}
Let
\[
\sigma_1(H_m)\geq\sigma_2(H_m)\geq\cdots\geq\sigma_m(H_m)\geq0
\]
and
\[
\sigma_1(\widetilde H_m)\geq\sigma_2(\widetilde H_m)\geq\cdots
\geq\sigma_m(\widetilde H_m)\geq0
\]
be the singular values of \(H_m\) and \(\widetilde H_m\), respectively.
If \eqref{eq:moment-error-bound} holds, then
\begin{align}
\label{eq:weyl-singular-bound}
\left|
\sigma_k(\widetilde H_m)-\sigma_k(H_m)
\right|
\leq
m\varepsilon_m,
\qquad
k=1,\ldots,m.
\end{align}
\end{lemma}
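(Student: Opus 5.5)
The plan is to combine Weyl's perturbation inequality for singular values with the spectral-norm bound \eqref{eq:hankel-perturb-bound} already derived above. By construction $\widetilde H_m = H_m + E_m$ with $E_m=(e_{r+c})_{r,c=0}^{m-1}$. I will invoke Weyl's inequality in the following form: for any two complex $m\times m$ matrices $A$ and $B$,
\[
|\sigma_k(A+B)-\sigma_k(A)|\le \|B\|_2,\qquad k=1,\ldots,m.
\]
Applying this with $A=H_m$ and $B=E_m$ gives $|\sigma_k(\widetilde H_m)-\sigma_k(H_m)|\le\|E_m\|_2$ for every $k$.

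For completeness I would justify the inequality from the Courant--Fischer min--max characterization,
\[
\sigma_k(A)=\max_{\dim S=k}\ \min_{x\in S,\ \|x\|=1}\|Ax\|.
\]
This formula holds for non-Hermitian matrices as well, because it can be obtained from the eigenvalues of $A^*A$. For any unit vector $x$, the triangle inequality gives
\[
\|(A+B)x\|\ge\|Ax\|-\|B\|_2 .
\]
Taking the min over $x\in S$ and then the max over $k$-dimensional subspaces $S$ yields $\sigma_k(A+B)\ge\sigma_k(A)-\|B\|_2$. Exchanging the roles of $A$ and $A+B$ (writing $A=(A+B)+(-B)$) gives the reverse inequality. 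An alternative route is to apply the Hermitian Weyl inequality to the Jordan--Wielandt dilation $\begin{pmatrix}0&A\\A^*&0\end{pmatrix}$, whose nonnegative eigenvalues are exactly the singular values of $A$.

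The final step is to insert \eqref{eq:hankel-perturb-bound}, namely $\|E_m\|_2\le\|E_m\|_F\le m\varepsilon_m$. The Frobenius bound follows because $E_m$ has $m^2$ entries, each bounded in modulus by $\varepsilon_m$ under \eqref{eq:moment-error-bound}. This gives \eqref{eq:weyl-singular-bound}.

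None of these steps is a real obstacle. The only point needing care is that $H_m$ is complex symmetric but not Hermitian, so the version of Weyl's inequality used must be the one for singular values, not the one for eigenvalues. The min--max argument above, or the dilation argument, handles this directly.
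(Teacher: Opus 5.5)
Your proposal is correct and matches the paper's proof: apply Weyl's singular-value perturbation inequality to $\widetilde H_m = H_m + E_m$, then use $\|E_m\|_2\le\|E_m\|_F\le m\varepsilon_m$. Your min--max (or Jordan--Wielandt) justification of the singular-value version of Weyl's inequality, needed because $H_m$ is complex symmetric rather than Hermitian, is a valid supplement to the paper, which simply cites the inequality.
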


\begin{proof}
By Weyl's perturbation inequality for singular values, we have
\[
\left|
\sigma_k(\widetilde H_m)-\sigma_k(H_m)
\right|
\leq
\|\widetilde H_m-H_m\|_2
=
\|E_m\|_2 .
\]
Combining this estimate with \eqref{eq:hankel-perturb-bound} gives
\eqref{eq:weyl-singular-bound}.
\end{proof}

The preceding lemma yields a sufficient condition for stable source counting.
Let \(m\geq N\). By Theorem~\ref{thm:hankel-rank},
\[
\operatorname{rank}H_m=N.
\]
Therefore
\[
\sigma_N(H_m)>0,
\qquad
\sigma_{N+1}(H_m)=\cdots=\sigma_m(H_m)=0.
\]
The quantity \(\sigma_N(H_m)\) is the smallest signal singular value of the
ideal Hankel matrix.

\begin{theorem}
\label{thm:stable-rank-count}
Assume that \(m\geq N\), the source locations are distinct, and the source
strengths are nonzero. Suppose that the moment perturbation satisfies
\eqref{eq:moment-error-bound} and
\begin{align}
\label{eq:rank-stability-condition}
m\varepsilon_m
<
\frac12\sigma_N(H_m).
\end{align}
Let the numerical rank estimator be
\begin{align}
\label{eq:numerical-rank-estimator}
\widehat N
=
\#\{k:\sigma_k(\widetilde H_m)>\tau_m\},
\end{align}
where the threshold \(\tau_m\) satisfies
\begin{align}
\label{eq:threshold-admissible}
m\varepsilon_m
<
\tau_m
<
\sigma_N(H_m)-m\varepsilon_m .
\end{align}
Then
\begin{align}
\label{eq:stable-rank-result}
\widehat N=N.
\end{align}
\end{theorem}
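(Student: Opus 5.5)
The plan is to combine the exact rank pattern of Theorem~\ref{thm:hankel-rank} with the uniform singular-value perturbation bound of Lemma~\ref{lem:weyl-hankel}. Since \(m\ge N\), the source locations are distinct and the strengths are nonzero, Theorem~\ref{thm:hankel-rank} gives \(\operatorname{rank}H_m=N\). Hence \(\sigma_N(H_m)>0\) and \(\sigma_k(H_m)=0\) for \(k>N\). Because \eqref{eq:moment-error-bound} holds, Lemma~\ref{lem:weyl-hankel} gives
\[
|\sigma_k(\widetilde H_m)-\sigma_k(H_m)|\le m\varepsilon_m
\]
for every \(k\). Before using the threshold, I will note that the interval in \eqref{eq:threshold-admissible} is nonempty exactly because of \eqref{eq:rank-stability-condition}: the inequality \(m\varepsilon_m<\sigma_N(H_m)-m\varepsilon_m\) is equivalent to \(2m\varepsilon_m<\sigma_N(H_m)\). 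So an admissible \(\tau_m\) exists.

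Next I will split the indices into the signal block \(k\le N\) and the noise block \(k>N\).

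\emph{Signal block.} For \(k\le N\), monotonicity of the singular values gives \(\sigma_k(H_m)\ge\sigma_N(H_m)\). Combining this with the Weyl bound and the upper constraint on \(\tau_m\),
\[
\sigma_k(\widetilde H_m)\ge \sigma_N(H_m)-m\varepsilon_m>\tau_m .
\]
Thus all \(N\) of these indices are counted in \eqref{eq:numerical-rank-estimator}.

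\emph{Noise block.} For \(k>N\), we have \(\sigma_k(H_m)=0\), so
\[
\sigma_k(\widetilde H_m)\le m\varepsilon_m<\tau_m .
\]
None of these indices are counted. This case is vacuous when \(m=N\).

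Putting the two blocks together, \(\widehat N=N\), which is \eqref{eq:stable-rank-result}.

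There is no real obstacle here. The only points needing care are that the inequalities in \eqref{eq:threshold-admissible} are strict, which makes the comparisons with \(\tau_m\) strict as the counting estimator requires. One should also remember that \(\sigma_N(H_m)\) is positive precisely because of the exact rank result, which is where the distinctness of the nodes and the nonvanishing of the weights enter.
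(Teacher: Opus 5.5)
Your proposal is correct and follows essentially the same approach as the paper: combine $\operatorname{rank}H_m=N$ from Theorem~\ref{thm:hankel-rank} with the Weyl bound of Lemma~\ref{lem:weyl-hankel}, then show separately that the singular values with $k\le N$ lie above $\tau_m$ and those with $k>N$ lie below it. Your remarks that \eqref{eq:rank-stability-condition} is exactly what makes the admissible interval for $\tau_m$ nonempty, and that the case $k>N$ is vacuous when $m=N$, are small additions the paper leaves implicit.
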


\begin{proof}
For \(k\leq N\), Lemma~\ref{lem:weyl-hankel} gives
\[
\sigma_k(\widetilde H_m)
\geq
\sigma_k(H_m)-m\varepsilon_m
\geq
\sigma_N(H_m)-m\varepsilon_m
>
\tau_m .
\]
For \(k>N\), since \(\sigma_k(H_m)=0\), Lemma~\ref{lem:weyl-hankel} gives
\[
\sigma_k(\widetilde H_m)
\leq
m\varepsilon_m
<
\tau_m .
\]
Thus exactly the first \(N\) singular values of \(\widetilde H_m\) exceed
\(\tau_m\), which proves \(\widehat N=N\).
\end{proof}

\begin{remark}
Theorem~\ref{thm:stable-rank-count} shows that the stability of the
moment--Hankel rank method is governed by the gap between the smallest signal
singular value \(\sigma_N(H_m)\) and the zero singular values of the ideal
Hankel matrix. Since \(\sigma_{N+1}(H_m)=0\), this gap is precisely
\(\sigma_N(H_m)\). Close source locations or very weak source strengths reduce
\(\sigma_N(H_m)\), and therefore reduce the admissible perturbation level.
This explains why near-colliding sources and weak sources are more difficult
to identify from noisy data.
\end{remark}

%
%

\subsection{Practical rank estimators}
\label{subsec:practical-rank-estimators}

Theorem~\ref{thm:stable-rank-count} gives a sufficient condition for exact
rank recovery. Its threshold condition is useful for analysis, but it cannot
be applied directly in computation because it involves the unknown ideal
quantity \(\sigma_N(H_m)\) and the unknown source number \(N\). We therefore
use practical rank-selection rules based only on the singular values of the
empirical Hankel matrix \(\widetilde H_m\).

Let
\[
\sigma_1^\delta\geq\sigma_2^\delta\geq\cdots\geq\sigma_m^\delta\geq0
\]
be the singular values of \(\widetilde H_m\). In the ideal case, exactly the
first \(N\) singular values are nonzero, while the remaining singular values
are zero. In noisy data, the zero singular values are lifted by perturbations.
Thus the source-counting problem becomes the problem of separating significant
singular values from the noise floor.

When an estimate of the relative moment noise level \(\delta\) is available,
we use the noise-aware threshold
\begin{align}
\label{eq:noise-aware-threshold}
\tau_\delta
=
c_{\rm noise}\,\delta\,\sigma_1^\delta,
\end{align}
where \(c_{\rm noise}>0\) is a safety factor and \(\sigma_1^\delta\) gives the
scale of the empirical Hankel matrix. The estimated source number is
\begin{align}
\label{eq:noise-aware-rank-estimator}
\widehat N
=
\#\{k:\sigma_k^\delta>\tau_\delta\}.
\end{align}
This rule counts the singular values that are significantly above the estimated
noise level.

When the noise level is unavailable, one may instead use the largest relative
gap in the singular-value sequence:
\[
\widehat N
=
\argmax_{1\leq k\leq m-1}
\frac{\sigma_k^\delta}{\sigma_{k+1}^\delta}.
\]
This heuristic is motivated by the ideal rank structure: the largest drop in
the singular values is expected to occur between the last signal singular value
\(\sigma_N^\delta\) and the first noise-dominated singular value
\(\sigma_{N+1}^\delta\).

The numerical experiments below compare these two practical rank-selection
rules. Their performance is then interpreted using the stability ratio
\[
\frac{m\varepsilon_m}{\sigma_N(H_m)},
\]
which measures the perturbation level relative to the smallest ideal signal
singular value.



\section{Numerical Experiments}
\label{sec:numerical}

In this section, we test the proposed moment--Hankel rank method and examine
its stability in connection with the singular-value perturbation result in
Theorem~\ref{thm:stable-rank-count}. The purpose is to illustrate the
low-frequency rank characterization established in Theorem~\ref{thm:hankel-rank}
and to show how the smallest signal singular value controls the practical
performance of numerical rank selection.

All experiments are carried out in the unit disk. The low-frequency moments are
generated from the finite exponential-sum representation
\[
\mathcal M_n
=
\sum_{j=1}^{N}w_j\lambda_j^n,
\qquad
\lambda_j=\overline{p_j},
\qquad
w_j=-\frac{q_j}{2\pi},
\]
derived in Section~\ref{sec:hankel-rank}. Unless otherwise stated, the Hankel
order is chosen as \(m=8\).

For noisy experiments, we perturb the moment sequence directly in order to
isolate the stability of the moment--Hankel rank step. For a relative moment
noise level \(\delta\), we use
\[
\mathcal M_n^\delta
=
\mathcal M_n
+
\delta
\left(
\frac1{2m-1}\sum_{k=0}^{2m-2}|\mathcal M_k|^2
\right)^{1/2}
\xi_n,
\qquad
n=0,1,\ldots,2m-2,
\]
where \(\xi_n\) are independent standard complex Gaussian random variables. The
source number is estimated either by the noise-aware threshold rule
\[
\widehat N
=
\#\{k:\sigma_k^\delta>\tau_\delta\},
\qquad
\tau_\delta=c_{\rm noise}\delta\sigma_1^\delta,
\]
or by the largest-gap rule applied to the singular-value sequence of the noisy
Hankel matrix. In the reported experiments, \(c_{\rm noise}=5\), and each
Monte Carlo success rate is computed from 200 independent noise realizations.

For each noisy trial, we also compute the observed moment perturbation
\[
\varepsilon_m^{\rm obs}
=
\max_{0\leq n\leq 2m-2}
|\mathcal M_n^\delta-\mathcal M_n|
\]
and the stability ratio
\[
R
=
\frac{m\varepsilon_m^{\rm obs}}{\sigma_N(H_m)}.
\]
According to Theorem~\ref{thm:stable-rank-count}, the condition
\[
R<\frac12
\]
is sufficient for stable numerical rank recovery, provided that the rank
threshold is chosen between the perturbation level and the smallest signal
singular value.

\subsection{Noiseless Hankel rank pattern}

We first verify the rank characterization in the noiseless case. For
\(N=1,\ldots,5\), we construct deterministic source configurations with
distinct locations and nonzero strengths, compute the exact moment sequence,
and form the Hankel matrices \(H_m\) for \(m=1,\ldots,9\).

Figure~\ref{fig:noiseless-rank} shows the singular-value spectra of \(H_8\) and
the numerical rank as the Hankel order varies. The singular-value spectra
display a sharp drop after the \(N\)-th singular value, while the remaining
singular values are at the level of machine precision. The numerical rank
pattern is exactly
\[
\operatorname{rank}H_m=\min\{m,N\},
\]
and in particular
\[
\operatorname{rank}H_m=N
\qquad
\text{for all }m\geq N.
\]
This confirms the moment--Hankel rank principle in
Theorem~\ref{thm:hankel-rank}.

\begin{figure}[htbp]
\centering
\begin{subfigure}{0.48\textwidth}
\centering
\includegraphics[width=\textwidth]{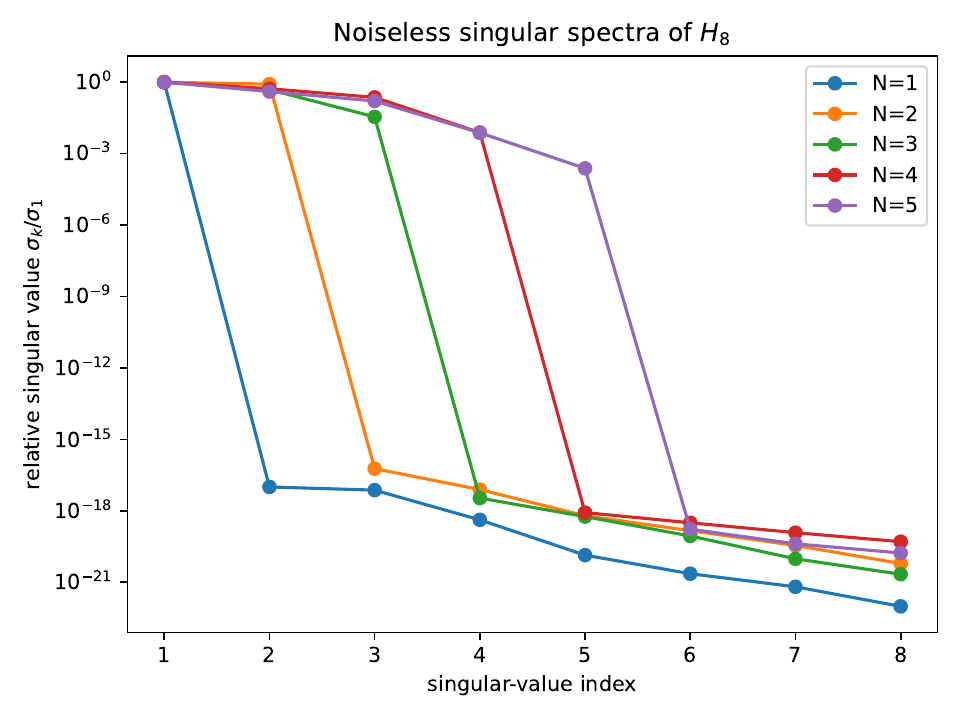}
\caption{Relative singular values of \(H_8\).}
\end{subfigure}
\hfill
\begin{subfigure}{0.48\textwidth}
\centering
\includegraphics[width=\textwidth]{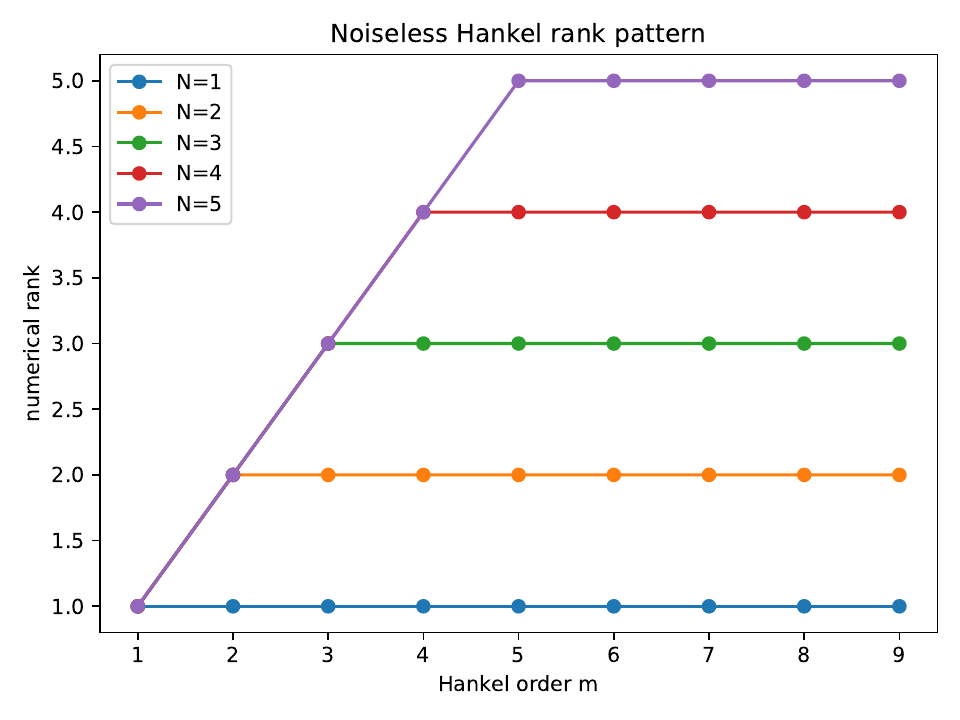}
\caption{Numerical rank as \(m\) varies.}
\end{subfigure}
\caption{Noiseless moment--Hankel rank identification. The singular-value
spectra show a clear drop after the \(N\)-th singular value, and the numerical
rank satisfies \(\operatorname{rank}H_m=N\) once \(m\geq N\).}
\label{fig:noiseless-rank}
\end{figure}

\subsection{Noise sensitivity and Weyl-type stability threshold}

We next examine the effect of moment noise on numerical rank selection and
compare the observed behavior with the Weyl-type stability condition in
Theorem~\ref{thm:stable-rank-count}. For each \(N=1,\ldots,5\), the same source
configurations as in the noiseless experiment are used. Independent complex
Gaussian perturbations are added to the moments at the noise levels
\[
\delta
=
0,\ 10^{-5},\ 10^{-4},\ 10^{-3},\ 10^{-2},\
3\times10^{-2},\ 5\times10^{-2}.
\]

Figure~\ref{fig:noise-rank-selection} compares the largest-gap rule and the
noise-aware threshold rule. The threshold rule is more consistent with the
singular-value perturbation analysis. For \(N=1\) and \(N=2\), it correctly
identifies the source number for all tested noise levels. For \(N=3\) and
\(N=4\), it remains accurate for small noise levels, but fails when the
smallest signal singular values are absorbed into the noise floor. For \(N=5\),
the rank estimate becomes sensitive much earlier. The largest-gap rule is
useful as a heuristic when no noise level is available, but it becomes less
stable for larger \(N\), especially when noise lifts several small singular
values.

The stability ratio shown in Figure~\ref{fig:weyl-stability} explains this
transition. When the median value of
\[
R=\frac{m\varepsilon_m^{\rm obs}}{\sigma_N(H_m)}
\]
stays below \(1/2\), the threshold rank selector is reliable. Once this ratio
crosses the sufficient threshold \(1/2\), the success rate drops sharply. Thus
the numerical transition from successful rank recovery to failure is consistent
with the sufficient condition in Theorem~\ref{thm:stable-rank-count}. The
condition is not necessary, but it provides a clear and computable diagnostic
for when numerical rank selection can be trusted.

The dominant failure mode is underestimation of the source number. This is
expected from the singular-value structure: noise lifts the zero singular
values, while small nonzero signal singular values may be pushed below the
chosen threshold. As a result, sources associated with weak singular directions
are missed.

\begin{figure}[htbp]
\centering
\begin{subfigure}{0.48\textwidth}
\centering
\includegraphics[width=\textwidth]{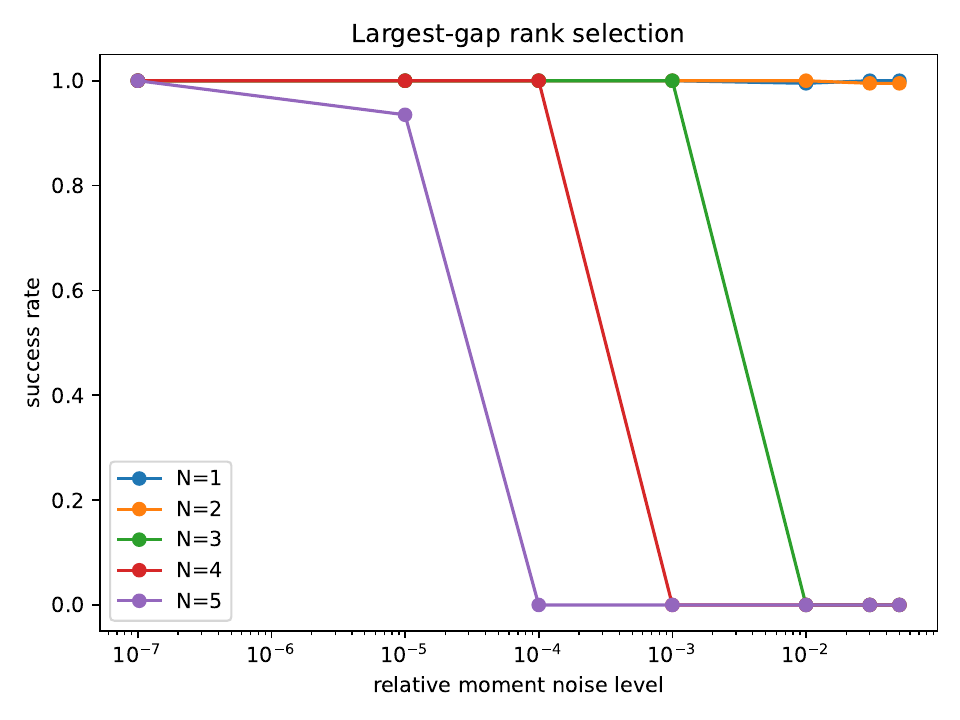}
\caption{Largest-gap rank selection.}
\end{subfigure}
\hfill
\begin{subfigure}{0.48\textwidth}
\centering
\includegraphics[width=\textwidth]{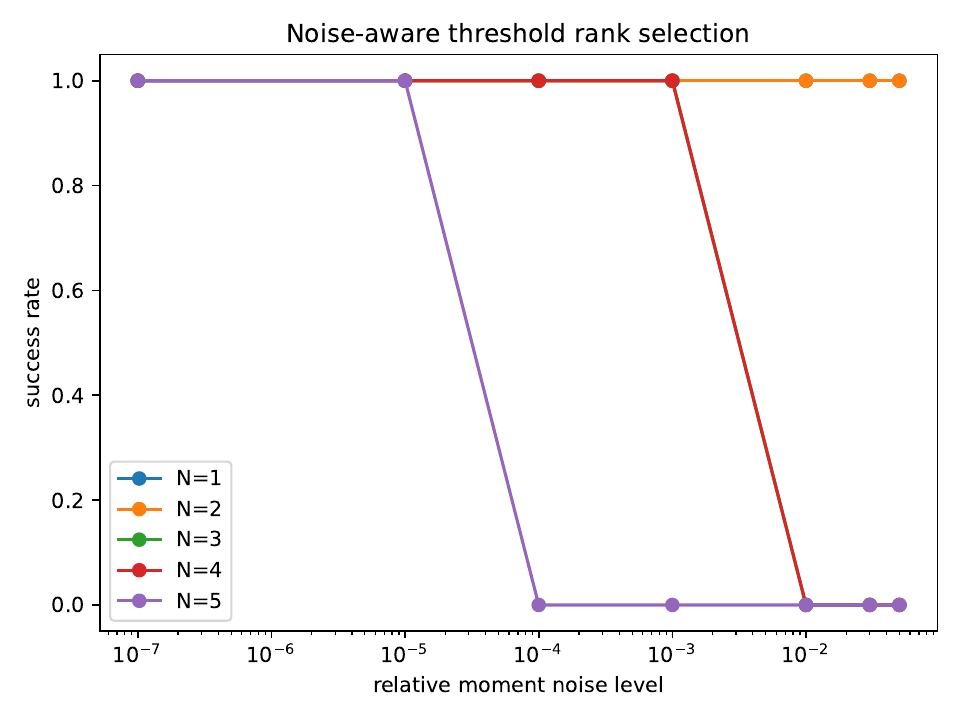}
\caption{Noise-aware threshold rank selection.}
\end{subfigure}
\caption{Monte Carlo success rates for source-number estimation under moment
noise. The noise-aware threshold rule is more consistent with the perturbation
analysis, while the largest-gap rule is more sensitive for larger source
numbers.}
\label{fig:noise-rank-selection}
\end{figure}

\begin{figure}[htbp]
\centering
\begin{subfigure}{0.48\textwidth}
\centering
\includegraphics[width=\textwidth]{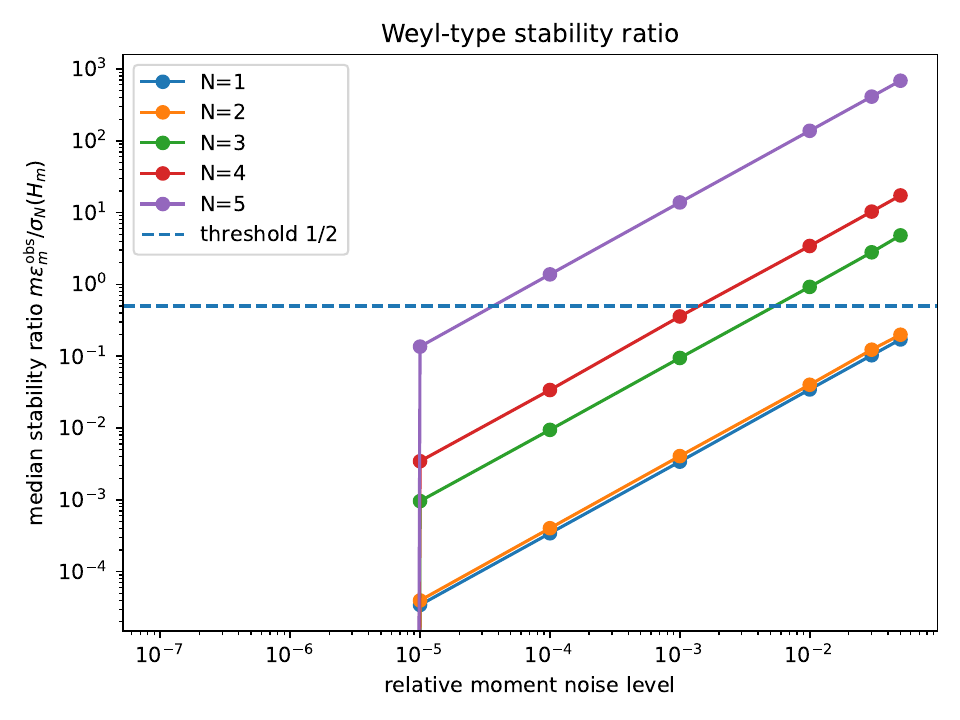}
\caption{Median stability ratio.}
\end{subfigure}
\hfill
\begin{subfigure}{0.48\textwidth}
\centering
\includegraphics[width=\textwidth]{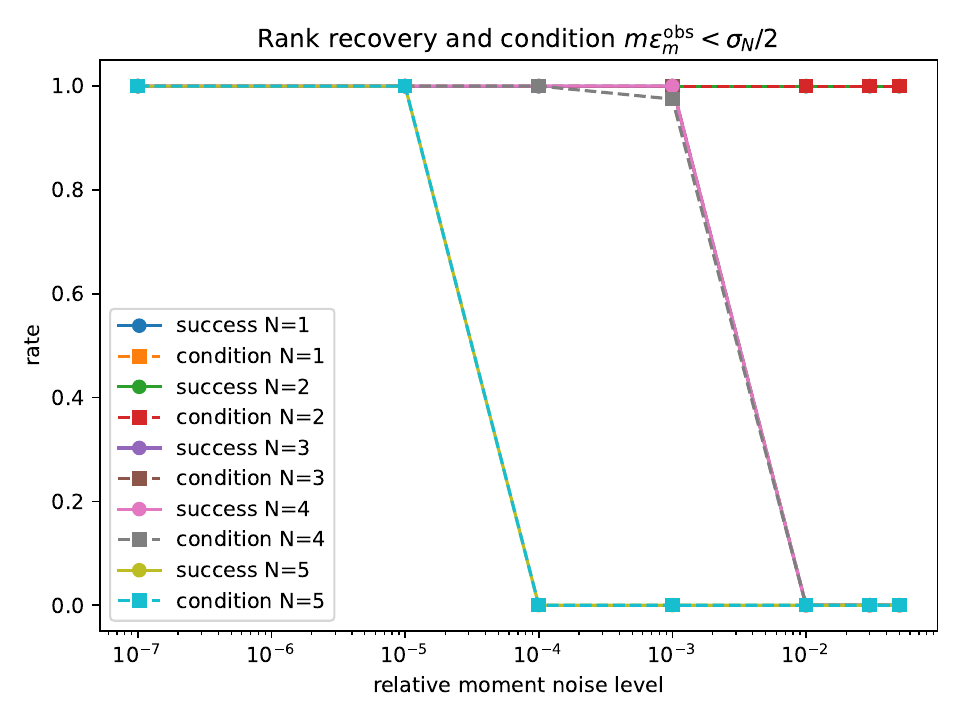}
\caption{Success rate and sufficient-condition rate.}
\end{subfigure}
\caption{Weyl-type stability diagnostics. The dashed line in panel (a)
indicates the sufficient threshold \(R=1/2\). The transition from successful
rank recovery to failure occurs when
\(m\varepsilon_m^{\rm obs}/\sigma_N(H_m)\) crosses this stability level.}
\label{fig:weyl-stability}
\end{figure}

\subsection{Resolution limits: close sources and weak sources}

The stability theorem shows that reliable numerical rank selection requires
\[
m\varepsilon_m < \frac12\sigma_N(H_m).
\]
Thus the admissible perturbation level is proportional to the smallest nonzero
singular value \(\sigma_N(H_m)\). We now examine two configurations in which
this singular value becomes small: close source locations and weak source
strengths.

First, we consider a two-source configuration and decrease the separation
\(|p_1-p_2|\). Figure~\ref{fig:resolution-limits}(a) plots the smallest signal
singular value \(\sigma_N(H_m)\) as a function of the source separation. As the
two sources approach each other, \(\sigma_N(H_m)\) decays rapidly. Although the
exact rank remains equal to \(2\), the numerical rank becomes increasingly
difficult to distinguish in noisy data because the admissible perturbation
level decreases.

Second, we consider a three-source configuration and gradually decrease the
strength \(q_3\) of the third source. Figure~\ref{fig:resolution-limits}(b)
shows that the smallest signal singular value decreases as the weak source
strength is reduced. Therefore weak sources can be missed by numerical rank
selection even though they are present in the exact moment sequence.

Figure~\ref{fig:admissible-eps} reports the admissible perturbation level
\[
\frac{\sigma_N(H_m)}{2m}
\]
predicted by Theorem~\ref{thm:stable-rank-count}. Both close sources and weak
sources reduce this admissible level. These results explain the sensitivity
observed in the noisy rank experiments: the rank method is stable when the
smallest signal singular value is clearly separated from the noise floor, and
it becomes unstable when this separation is lost.

\begin{figure}[htbp]
\centering
\begin{subfigure}{0.48\textwidth}
\centering
\includegraphics[width=\textwidth]{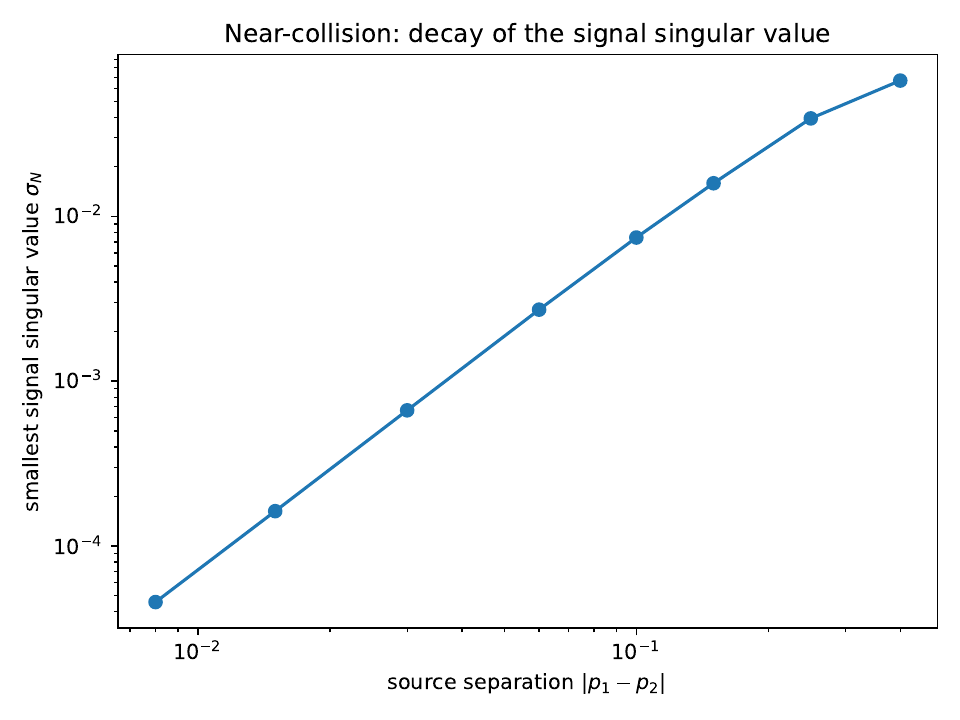}
\caption{Close-source experiment.}
\end{subfigure}
\hfill
\begin{subfigure}{0.48\textwidth}
\centering
\includegraphics[width=\textwidth]{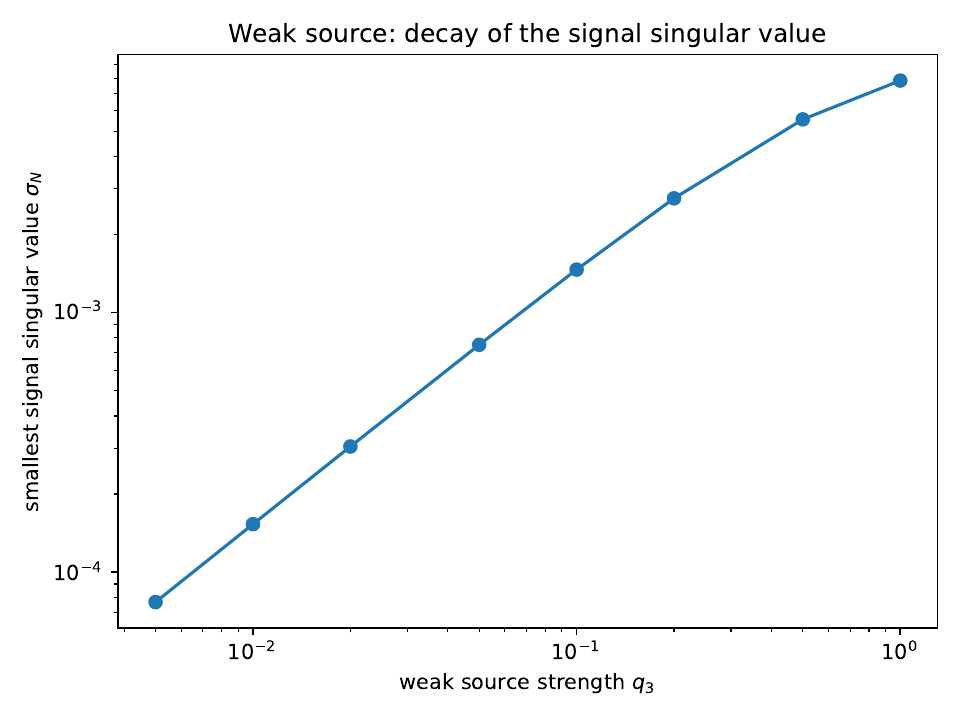}
\caption{Weak-source experiment.}
\end{subfigure}
\caption{Resolution limits of the moment--Hankel rank method. The smallest
signal singular value \(\sigma_N(H_m)\) decreases when sources become close or
when one source becomes weak.}
\label{fig:resolution-limits}
\end{figure}

\begin{figure}[htbp]
\centering
\begin{subfigure}{0.48\textwidth}
\centering
\includegraphics[width=\textwidth]{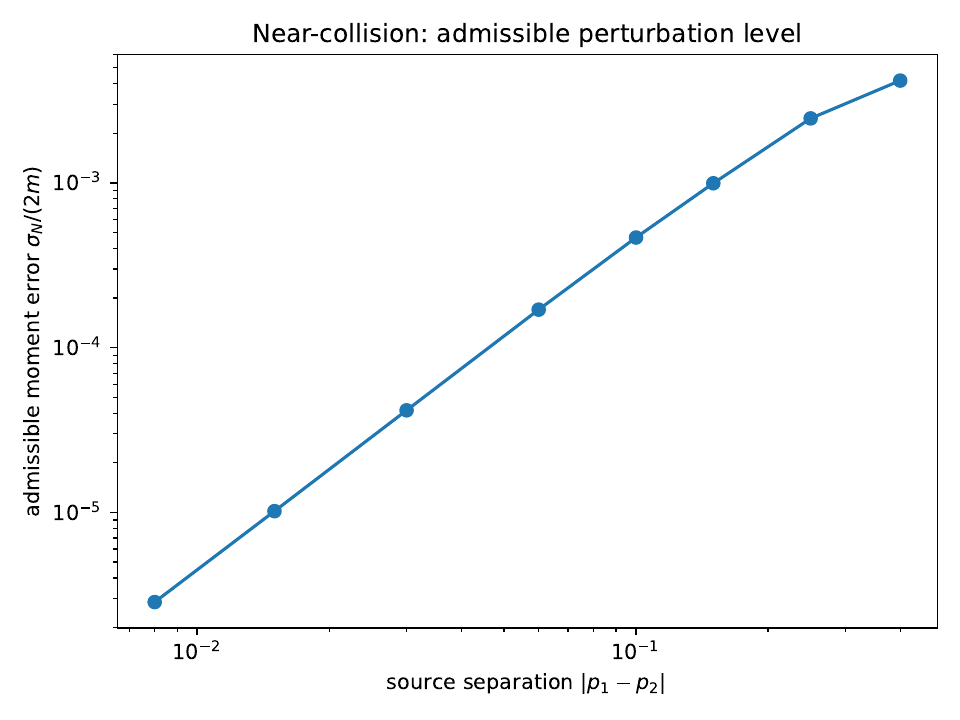}
\caption{Close-source experiment.}
\end{subfigure}
\hfill
\begin{subfigure}{0.48\textwidth}
\centering
\includegraphics[width=\textwidth]{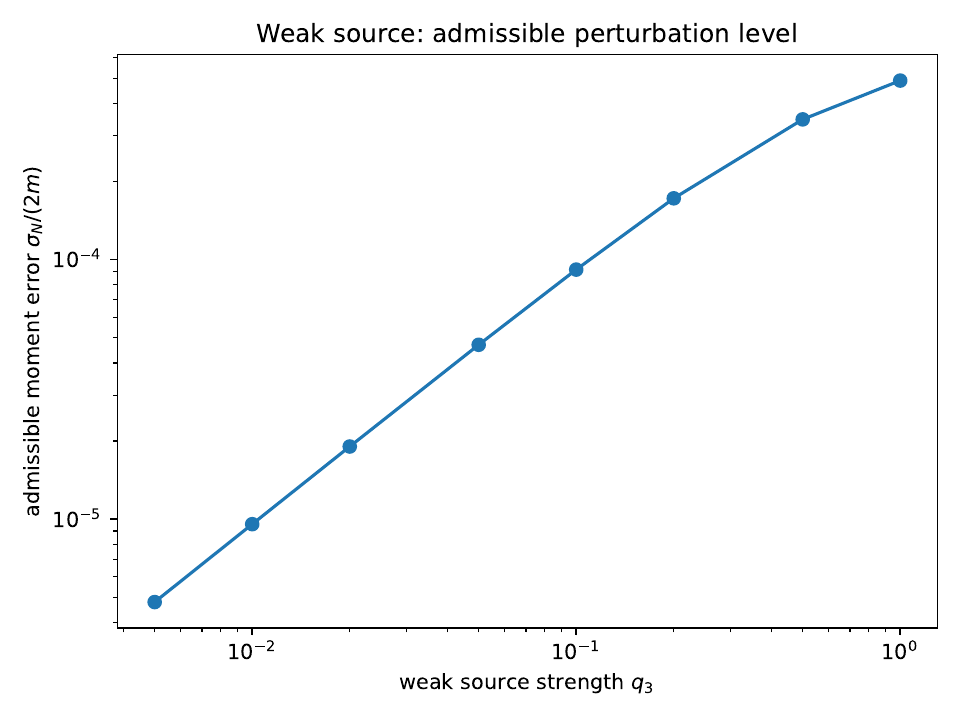}
\caption{Weak-source experiment.}
\end{subfigure}
\caption{Admissible moment perturbation level
\(\sigma_N(H_m)/(2m)\) predicted by the stability theorem. Close and weak
sources reduce the admissible perturbation level and therefore make numerical
rank selection more sensitive to noise.}
\label{fig:admissible-eps}
\end{figure}

\subsection{Location and strength recovery after rank identification}

After the source number has been identified, the same low-frequency moment
sequence can be used to recover source locations and strengths. In this
experiment, the true value of \(N\) is used in the reconstruction step in order
to isolate the stability of the Prony--Vandermonde recovery. The locations are
obtained from the roots of the annihilating polynomial, and the strengths are
then computed by solving the corresponding Vandermonde system.

We measure the location error by
\[
E_p
=
\left(
\frac1N
\sum_{j=1}^N
|\widehat p_{\pi(j)}-p_j|^2
\right)^{1/2},
\]
where the permutation \(\pi\) is chosen to minimize the matching error. The
relative strength error is defined by
\[
E_q
=
\frac{
\left(
\sum_{j=1}^N
|\widehat q_{\pi(j)}-q_j|^2
\right)^{1/2}
}{
\left(
\sum_{j=1}^N
|q_j|^2
\right)^{1/2}
}.
\]

Figure~\ref{fig:reconstruction} reports the reconstruction errors for
representative source numbers \(N=1,3,5\), together with the condition number
of the Vandermonde matrix. In all cases, the noiseless reconstruction reaches
machine precision. As the moment noise increases, both \(E_p\) and \(E_q\)
increase. The strength error is generally larger than the location error,
especially for larger \(N\), because the strengths are recovered from a
Vandermonde linear system after the nodes have already been estimated.

The Vandermonde condition number grows rapidly with \(N\). In the tested
configurations, it increases from \(1\) for \(N=1\) to approximately
\(1.49\times10^2\) for \(N=5\). This growth explains the deterioration of the
reconstruction for larger source numbers. Thus the recovery stage is accurate
for small and moderately conditioned configurations, but it becomes sensitive
when the Vandermonde system is ill conditioned.

\begin{figure}[htbp]
\centering
\begin{subfigure}{0.48\textwidth}
\centering
\includegraphics[width=\textwidth]{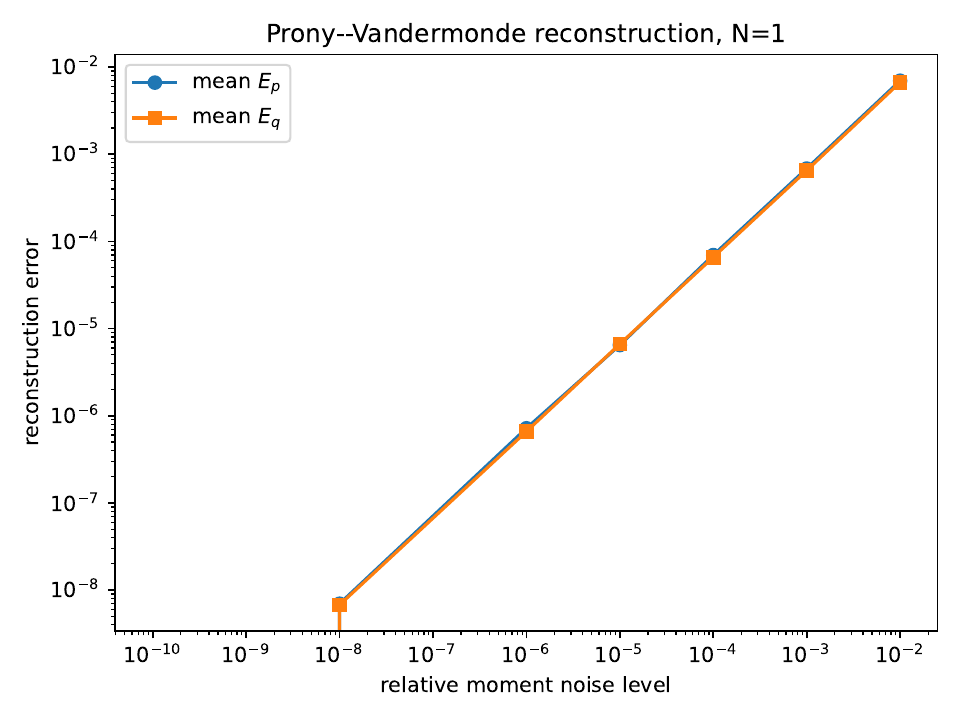}
\caption{\(N=1\).}
\end{subfigure}
\hfill
\begin{subfigure}{0.48\textwidth}
\centering
\includegraphics[width=\textwidth]{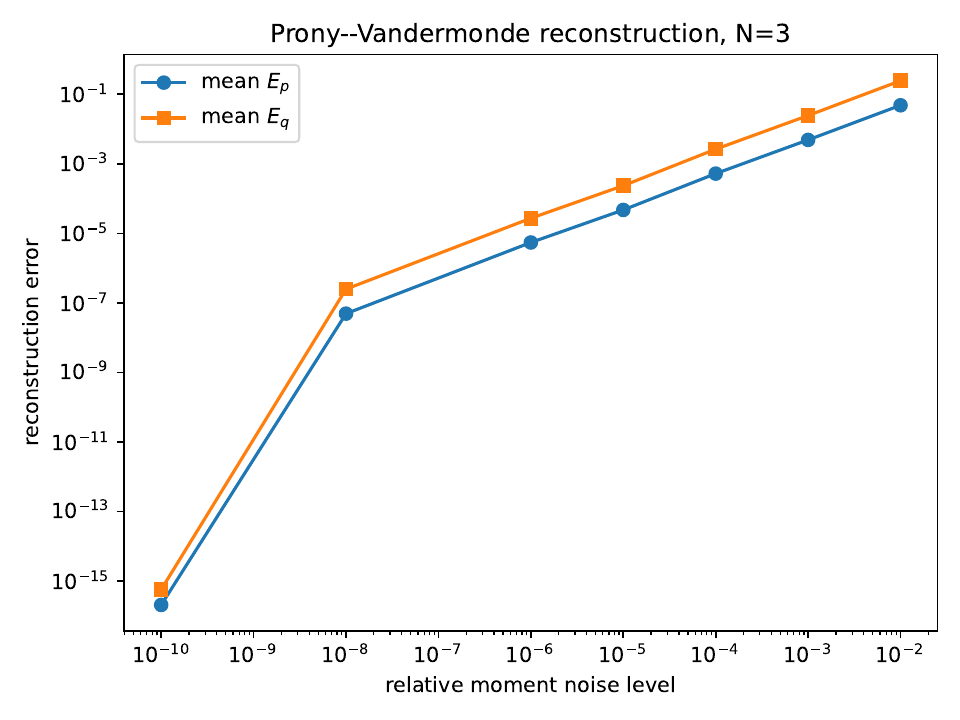}
\caption{\(N=3\).}
\end{subfigure}

\vspace{0.3cm}

\begin{subfigure}{0.48\textwidth}
\centering
\includegraphics[width=\textwidth]{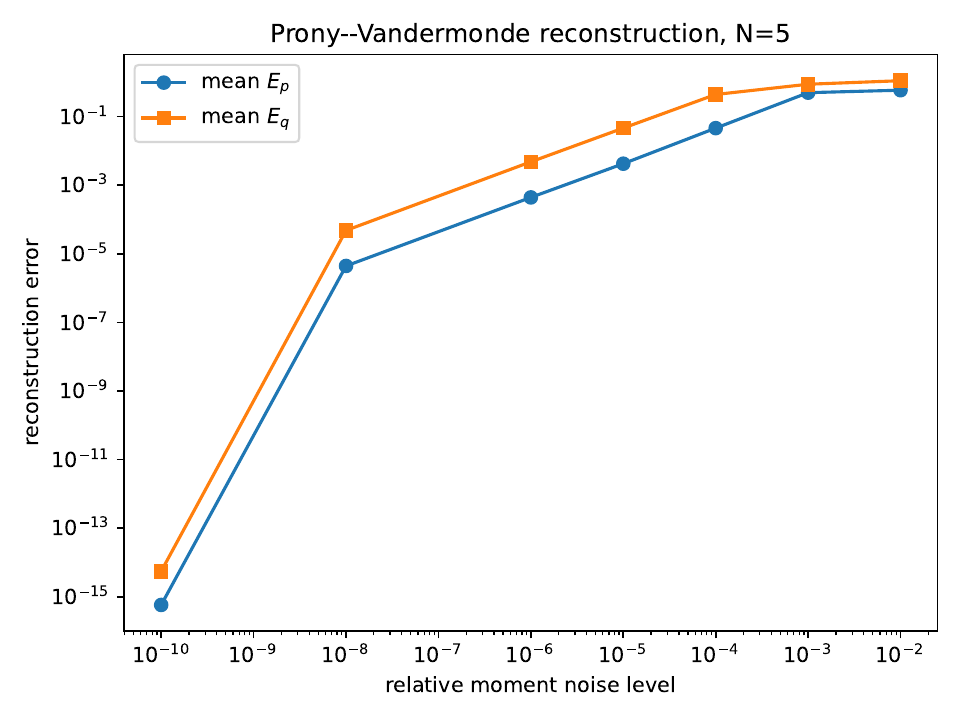}
\caption{\(N=5\).}
\end{subfigure}
\hfill
\begin{subfigure}{0.48\textwidth}
\centering
\includegraphics[width=\textwidth]{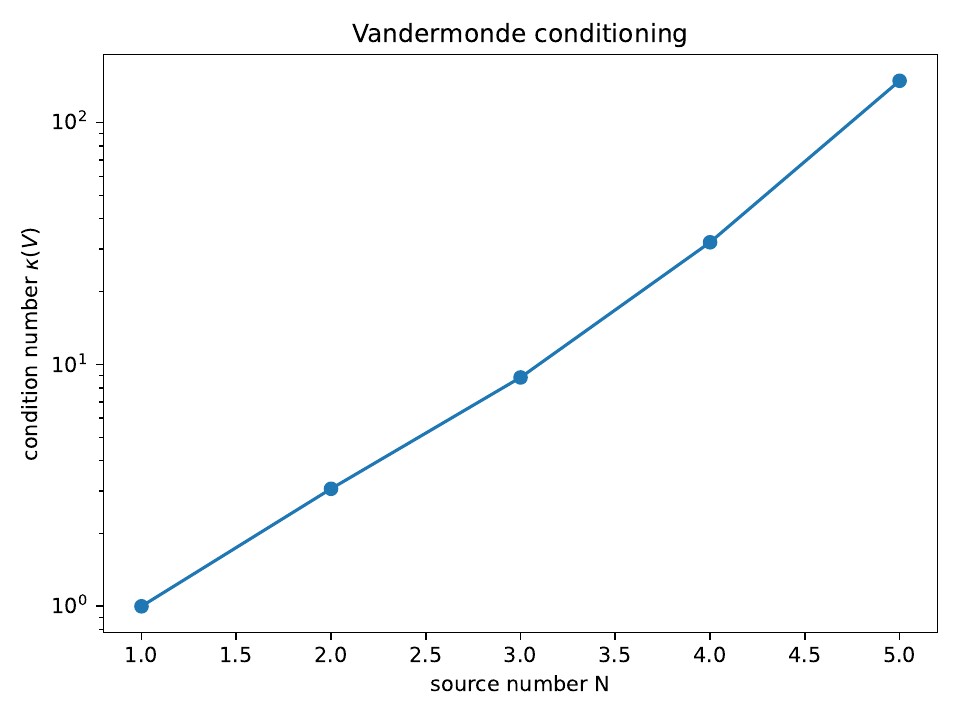}
\caption{Vandermonde conditioning.}
\end{subfigure}
\caption{Prony--Vandermonde reconstruction after source-number identification.
The reconstruction is accurate for small \(N\) and small moment noise, but
deteriorates as \(N\) increases because the associated Vandermonde system
becomes more ill conditioned.}
\label{fig:reconstruction}
\end{figure}

\subsection{Summary of the numerical findings}

The numerical experiments support the theoretical rank characterization and
clarify the practical limitations of the method. In the noiseless case, the
Hankel rank exactly recovers the source number once \(m\geq N\). Under moment
noise, the method remains effective when the stability ratio
\[
\frac{m\varepsilon_m^{\rm obs}}{\sigma_N(H_m)}
\]
is below the theoretical threshold \(1/2\). The main source of failure is rank
underestimation, which occurs for larger \(N\), nearly colliding sources, or
weak sources. These cases reduce \(\sigma_N(H_m)\), thereby reducing the
admissible perturbation level predicted by Theorem~\ref{thm:stable-rank-count}.

After the source number is correctly identified, the Prony--Vandermonde step
recovers locations and strengths accurately for well-conditioned
configurations. The strength recovery is more sensitive than the location
recovery because it requires solving a Vandermonde system. Overall, the
experiments show that the moment--Hankel rank method is an effective
low-frequency source-counting procedure when the significant singular values
are separated from the perturbation level.

\section{Conclusion}
\label{sec:conclusion}

This paper proposed a low-frequency moment--Hankel rank method for identifying
the number of stationary point sources in the heat equation in the unit disk.
After applying the Laplace transform and normalizing the boundary flux, we
considered the low-frequency limit of the resulting boundary data. The positive
Fourier moments of this limit were shown to have a finite exponential-sum
representation, in which the exponential nodes encode the source locations,
the weights encode the source strengths, and the number of exponential terms
equals the number of point sources.

The main result shows that the Hankel matrix generated by these moments admits
a Vandermonde factorization. Under the natural assumptions that the source
locations are distinct and the source strengths are nonzero, the rank of the
Hankel matrix is exactly the number of point sources, provided that the Hankel
order is no smaller than the true source number. This gives a direct
deterministic source-counting principle based on the low-frequency moment
sequence. Compared with determinant zero-counting methods in the complex
Laplace-frequency plane, the present approach avoids contour integration and
reduces the counting step to a finite-dimensional numerical rank problem.

We also discussed the implementation of the rank criterion from discrete and
noisy boundary data. In practical computation, the exact Hankel rank is
replaced by the numerical rank determined from the singular values of the
empirical Hankel matrix. The numerical experiments confirm the theoretical rank
pattern in the noiseless case and show that the method can reliably identify
the source number when the significant singular values are well separated from
the noise floor. The experiments further demonstrate that close sources and
weak sources reduce the smallest signal singular value, thereby making the
numerical rank selection more sensitive to noise.

After the source number is identified, the same moment sequence can be used to
recover source locations and strengths through an annihilating-polynomial and
Vandermonde reconstruction procedure. The numerical results show accurate
recovery for well-conditioned configurations, while also indicating that the
strength reconstruction is more sensitive than the location reconstruction due
to the conditioning of the Vandermonde system.

The present work focuses on the unit disk, where the low-frequency Fourier
moment representation is explicit. Extensions to more general domains,
development of more robust numerical rank-selection rules, and regularized
moment reconstruction under stronger noise are natural directions for future
work.







\bibliographystyle{plain}   %
\bibliography{myref}       %

@article{Abdelaziz2015,
	author = {B. Abdelaziz and A. El Badia and A. El Hajj},
	journal = {Inverse Problems},
	number = {105002},
	pages = {26pp},
	title = {Direct algorithms for solving some inverse source problems in 2D elliptic equations},
	volume = {31},
	year = {2015}}

@article{Deng2026_Hankel,
	author = {Z. Deng and A. Qian and X. Yang},
	journal = {https://arxiv.org/abs/2606.15065},
	title = {A {Hankel} determinant zero-order principle for source counting in an inverse heat point-source problem}}

@article{Vessella1992,
	author = {S. Vessella},
	journal = {Inverse Problems},
	number = {911--917},
	title = {Locations and strengths of point sources: stability estimates},
	volume = {8},
	year = {1992}}

@article{Ohe2011,
	author = {T. Ohe and H. Inui H and K. Ohnaka},
	journal = {Inverse Problems},
	number = {115011},
	title = {Real-time reconstruction of time-varying point sources in a three-dimensional scalar wave equation},
	volume = {27},
	year = {2011}}

@article{Mamonov2013,
	author = {A. V. Mamonov and Y-H R. Tsai},
	journal = {Inverse Problems},
	number = {035009},
	title = {Point source identification in nonlinear advection diffusion reaction systems},
	volume = {29},
	year = {2013}}

@article{Komornik2002,
	author = {V. Komornik and M. Yamamoto},
	journal = {Inverse Problems},
	pages = {319--329},
	title = {Upper and lower estimates in determining point sources in a wave equation},
	volume = {18},
	year = {2002}}

@article{Ren2019,
	author = {K. Ren and Y. Zhong},
	journal = {Inverse Problems},
	pages = {125003},
	title = {Imaging point sources in heterogeneous environments},
	volume = {35},
	year = {2019}}

@article{Ling2006,
	author = {L. Ling and M. Yamamoto and Y. C. Hon and T. Takeuchi},
	journal = {Inverse Problems},
	number = {4},
	pages = {1289},
	title = {Identification of source locations in two-dimensional heat equations},
	volume = {22},
	year = {2006}}

@article{Cannon1998,
	author = {J. R. Cannon and P. DuChateau},
	journal = {Inverse Problems},
	number = {3},
	pages = {535},
	title = {Structural identification of an unknown source term in a heat equation},
	volume = {14},
	year = {1998}}

@article{Kian2019,
	author = {Y. Kian and M. Yamamoto},
	journal = {Inverse Problems},
	number = {11},
	title = {Reconstruction and stable recovery of source terms and coefficients appearing in diffusion equations},
	volume = {35},
	year = {2019}}

@article{Gu2025,
	author = {Q. Gu and W. Zhang and Z. Zhang},
	journal = {SIAM Journal on Applied Mathematics},
	number = {5},
	pages = {2337-2354},
	title = {Determine the Point Source of the Heat Equation with Sparse Boundary Measurements},
	volume = {85},
	year = {2025}}

@article{Isakov1991,
	author = {V. Isakov},
	journal = {Communications on Pure and Applied Mathematics},
	number = {2},
	pages = {185-209},
	title = {Inverse parabolic problems with the final overdetermination},
	volume = {44},
	year = {1991}}

@article{Rundell1980,
	author = {W. Rundell and D. L. Colton},
	journal = {Applicable Analysis},
	number = {3},
	pages = {231-242},
	title = {Determination of an unknown non-homogeneous term in a linear partial differential equation from overspecified boundary data},
	volume = {10},
	year = {1980}}

@article{Bushuyev1995,
	author = {I. Bushuyev},
	journal = {Inverse Problems},
	number = {L11-L16},
	title = {Global uniqueness for inverse parabolic problems with final observation},
	volume = {11},
	year = {1995}}

@article{Choulli1994,
	author = {M. Choulli},
	journal = {Inverse Problems},
	pages = {1123--1132},
	title = {An inverse problem for a semilinear parabolic equation},
	volume = {10},
	year = {1994}}

@article{Reeve1994,
	author = {D. E. Reeve and M. Spivack},
	journal = {Inverse Problems},
	pages = {1335-1344},
	title = {Determination of a source term in the linear diffusion equation},
	volume = {10},
	year = {1994}}

@article{Solovev1989,
	author = {V. V. Solov'ev},
	journal = {Differential Equations},
	pages = {1114-1119},
	title = {On the solvability of the inverse problem of determining a source with overdetermination on the upper base for the parabolic equation},
	volume = {25},
	year = {1989}}

@article{Baratchart2005,
	author = {L. Baratchart and A. B. Abda and F. B. Hassen and J. Leblond},
	journal = {Inverse Problems},
	pages = {51--74},
	title = {Recovery of pointwise sources or small inclusions in 2d domains and rational approximation},
	volume = {21},
	year = {2005}}

@article{Kovalets2011,
	author = {I. V. Kovalets and S. Andronopoulos and A. G. Venetsanos and J. G. Bartzis},
	journal = {Mathematics and Computers in Simulation},
	pages = {244-257},
	title = {Identification of strength and location of stationary point source of atmospheric pollutant in urban conditions using computational fluid dynamics model},
	volume = {82},
	year = {2011}}

@article{Badia2005,
	author = {A. E. Badia and T. H. Duong and A. Hamdi},
	journal = {Inverse Problems},
	pages = {1121--1136},
	title = {Identification of a point source in a linear advection-dispersion-reaction equation: Application to a pollution source problem},
	volume = {21},
	year = {2005}}

@article{Deng2026,
	author = {Z. L. Deng and C. Li and X. M. Yang},
	journal = {https://arxiv.org/abs/2509.14245},
	title = {A {Bayesian} thinning algorithm for the point source identification of heat equation},
	year = {2025}}

@article{Gallet2022,
	author = {A. Gallet and S. Rigby and T. N. Tallman and X. Kong and I. Hajirasouliha and A. Liew and D. Liu and L. Chen and A. Hauptmann and D. Smyl},
	journal = {Proceedings of Royal Society A},
	number = {20210526},
	title = {Structural engineering from an inverse problems perspective},
	volume = {478},
	year = {2022}}

@article{He2018,
	author = {B. He and A. Sohrabpour and E. Brown and Z. Liu},
	journal = {Annual review of biomedical engineering},
	pages = {171-196},
	title = {Electrophysiological Source Imaging: A Noninvasive Window to Brain Dynamics},
	volume = {20},
	year = {2018}}

@article{Moghaddam2021,
	author = {M. B. Moghaddam and M. Mazaheri and J. M. V. Samani},
	journal = {Goundwater for sustainable development},
	number = {100651},
	title = {Inverse modeling of contaminant transport for pollution source identification in surface and groundwaters: a review},
	volume = {15},
	year = {2021}}

@article{Shlomi2007,
	author = {S. Shlomi and A. M. Michalak},
	journal = {Water resources research},
	number = {W03412},
	title = {A geostatistical framework for incorporating transport information in estimating the distribution of a groundwater contaminant plume},
	volume = {43},
	year = {2007}}

@article{Baillet2001,
	author = {S. Baillet and J. C. Mosher and R. M. Leahy},
	journal = {IEEE Signal Processing Magazine},
	number = {6},
	pages = {14-30},
	title = {Electromagnetic brain mapping},
	volume = {18},
	year = {2001}}

@article{Gong2026,
	archiveprefix = {arXiv},
	author = {F. Gong and B. Jin and Y. Kian and S. Liu},
	eprint = {2603.09248},
	journal = {arXiv preprint},
	primaryclass = {math.AP},
	title = {Identification of a Point Source in the Heat Equation from Sparse Boundary Measurements},
	year = {2026}}

@book{Isakov1990,
	address = {Providence, RI},
	author = {V. Isakov},
	publisher = {American Mathematical Society},
	title = {Inverse Source Problems},
	year = {1990}}

\end{document}